\newtheorem{teo}{Theorem}[section]
\newtheorem{lem}[teo]{Lemma}
\newtheorem{cor}[teo]{Corollary}
\newtheorem{prop}[teo]{Proposition}
\newtheorem{lem-defi}[teo]{Lemma-Definition}
\newcommand{\mn}{\mathbb{N}}
\newcommand{\mr}{\mathbb{R}}
\newcommand{\mz}{\mathbb{Z}}
\newcommand{\mg}{\mathbb{G}}
\newcommand{\mq}{\mathbb{Q}}
\newcommand{\ms}{\mathbb{S}}
\newtheorem{theo}{Theorem}[section]
\newtheorem{lemma}[theo]{Lemma}
\newtheorem{rem}[theo]{Remark}
\newcommand{\Ff}{{\mathcal F}}
\newcommand{\I}{{\mathcal I}}
\newcommand{\Mm}{{\mathcal M}}
\newcommand{\Nn}{{\mathcal N}}
\newcommand{\Pp}{{\mathscr P}}
\newcommand{\sPp}{{\scriptscriptstyle{\mathscr P}}}
\newcommand{\Ww}{{\mathcal W}}
\newcommand{\RR}{{\mathds R}}
\newcommand{\ZZ}{{\mathds Z}}
\newcommand{\tn}{\vvvert}
\DeclareMathOperator{\Ev}{Ev}
\begin{document}

\title{Tilings of the plane\\
and\\Thurston semi-norm} 

\pagestyle{headings}
\author{Jean-Ren\'e Chazottes, Jean-Marc Gambaudo, and Fran\c cois  Gautero}

\begin{abstract} 
We show that the problem of tiling the Euclidean plane with a finite set of polygons (up to translation)
boils down to prove the existence of zeros of  a non-negative convex function defined on a finite-dimensional simplex.
This function is  a generalisation, in the framework of branched surfaces, of the Thurston semi-norm originally defined for compact $3$-manifolds.
\end{abstract}

\keywords{Wang tilings, branched surfaces, translation surfaces.}

\noindent \address{{\it J.-R. Chazottes:} Centre de Physique Th\'eorique,
CNRS-\'Ecole Polytechnique, 91128 Palaisau Cedex, France.}  \email{jeanrene@cpht.polytechnique.fr}

\noindent \address{ {\it J.-M. Gambaudo: } Universit\'e Nice Sophia Antipolis - CNRS, INLN, 1361 route des lucioles, 06560 Valbonne, France.}  \email{gambaudo@unice.fr}

\noindent \address{ {\it F. Gautero: } Universit\'e Nice  Sophia Antipolis - CNRS, Laboratoire J.A.
Dieudonn\'e, 06108 Nice Cedex 02, France.}  \email{Francois.Gautero@unice.fr }

\date{\today}

\thanks{This work is part of the projects {\it Subtile} and {\it LAM} funded 
by the French National Agency for Research (ANR)}

\maketitle
\markboth{J.-R. Chazottes, J.-M. Gambaudo, F. Gautero}{Tilings of the plane and Thurston semi-norm}


\section{Introduction} 

Consider a finite collection  $\Pp= \{p_1, \dots , p_n\}$, where for $j=1, \dots, n$, $p_j$ is a  polygon  in the
Euclidean plane $\RR^2$, indexed by $j$  and with colored edges.
These decorated polygons are called  {\em prototiles}.
In the sequel a {\it rational prototile} (resp.{ \it integral prototile}) is a prototile whose vertices have rational (resp. integer) coordinates. 
A tiling of $\RR^2$ made with $\Pp$ is a collection $(t_i)_{i\geq 0}$ of polygons called {\it tiles} indexed by a symbol $k(i)$
in $\{1, \dots, n\}$, such that:
\begin{itemize}
\item the tiles cover the plane:  $\cup_{i\geq 0} t_i = \RR^2$;
\item the tiles have  disjoint interiors: $\textup{int}(t_i) \cap \textup{int}(t_j) = \emptyset$ whenever $i\neq j$;
\item whenever two distinct tiles intersect, they do it along a common edge and the colors match;
\item for each $i\geq 0$,  $t_i$ is a translated copy of $p_{k(i)}$.
\end{itemize}
We denote by $\Omega_{\!\Pp}$ the set of all tilings made with $\Pp$. 
Clearly this set may be empty.
When $\Omega_{\!\Pp}\neq \emptyset$ the group of translation acts on $\Omega_{\!\Pp}$ as follows:
\[
\Omega_{\!\Pp}\times \RR^2\ni (T, u)\mapsto T+u \in \Omega_{\!\Pp}
\]
where $T+u = (t_i+u)_{i\geq 0}$ whenever $T = (t_i)_{i\geq 0}$.
A tiling $T\in \Omega_{\!\Pp}$ is {\it periodic} if there exist two independent  vectors $u_1$ and $u_2$ in $\RR^2$
such that $T = T+u_1 = T+u_2$.  R. Berger \cite{Berger} proved that  the problem to know whether or not
$\Omega_{\!\Pp}$ is empty ({\it i.e.}\, if one can or cannot tile the plane with $\Pp$) is not decidable.
More precisely he showed that there is no algorithm that takes as input any family of prototiles $\Pp$ and gives as output in a finite time one of the following two results: $\Pp$ can tile the plane or $\Pp$ cannot tile the plane. 
Berger also showed that this undecidability is strongly related to the fact that there exist collections of prototiles $\Pp$
which tile the plane ($\Omega_{\!\Pp} \neq \emptyset$) but which cannot tile it periodically. Before we state our main result, we need some notations and definitions.\newline

The Anderson-Putnam $CW$-complex associated with a collection $\Pp$ of prototiles is the  cell complex
$AP_{\!\Pp}$ (see \cite{AP})  \footnote{Actually the construction  given by Anderson and Putnam is made in a   particular case that suppose that $\Pp$ tiles the plane. They get  a cell complex which is smaller than the one we defined here, however the basic ideas of both constructions are the same.}  made with 2-cells, 1-cells and 0-cells constructed as follows. There is  one 2-cell for each prototile and these 2-cells are glued along their colored edges by translation \cite{ AP}. An edge $e_{k_0}$ of  $p_{k_0}$ is glued to an edge $e_{k_1}$  of $p_{k_1}$  if and only if:
\begin{itemize}
\item  they have the same color;
\item  there exists a vector $v_{k_0, k_1}$ in $\RR^2$ such that  $e_{k_1}=  e_{k_0} + v_{k_0, k_1}$.
\end{itemize}


For $i = 1,2$, the vector space of linear combinations with real coefficients of the oriented $i$-cells is denoted by $C_i(AP_{\!\Pp},\RR)$, its elements are called $i$-chains and the coefficients are called coordinates.
For any chain $c$ in $C_i(AP_{\!\Pp},\RR)$, we denote by $\vert c\vert$  its $\ell_1$-norm. 
By convention, for each $i$-chain
$ c$ , $- c$  is the chain which corresponds to an inversion of the orientation.
Notice that there is a natural orientation of the 2-cells induced by the orientation of $\RR^2$, but there is no natural orientation of the 1-cells.  Given an arbitrary orientation on each 1-cell of $ AP_{\!\Pp}$, 
the 2-cells that contain this edge are split in two parts: the positive ones for which the orientation on the edge coincides with the one induced by the orientation of the 2-cell and the negative ones for which both orientations are different.
Notice that this splitting is independent, up to reversing, on the arbitrary choice of the orientation of the 1-cells. 
\begin{figure}[h!]
\centering
\includegraphics[width=5cm]{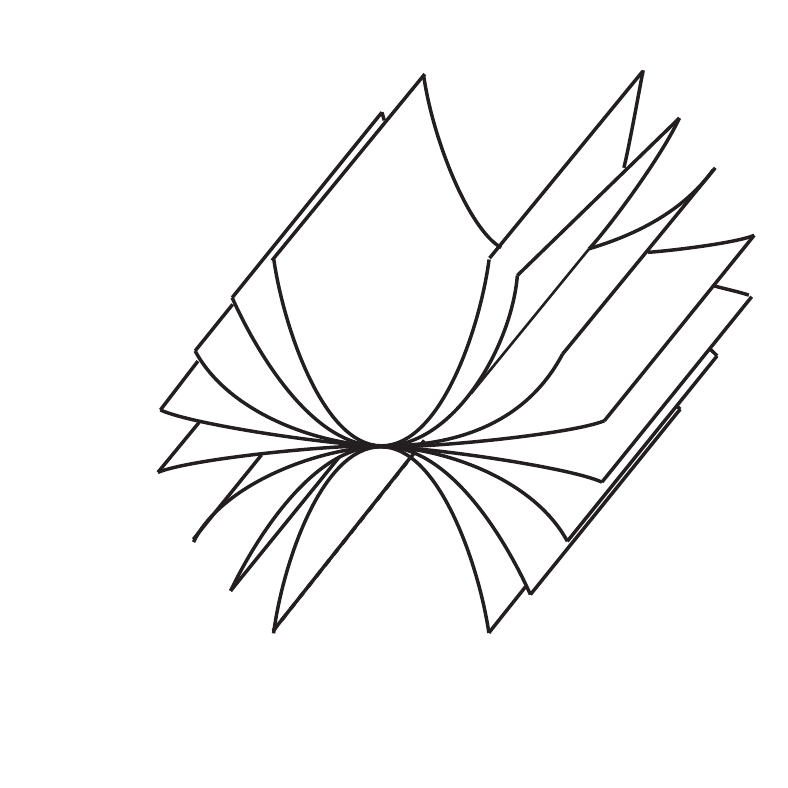}
\caption{Local view of the Anderson-Putnam complex.}
\label{APsmooth}
\end{figure}

We define the linear boundary operator
\[
\partial : C_2(AP_{\!\Pp},\RR)\to C_1(AP_{\!\Pp},\RR)
\]
which assigns to any face the sum of the edges at its boundary, weighted with a positive  (resp.\,negative) sign if the induced orientation fits (resp.\,does not fit) with the orientation chosen for these edges.
The kernel of the operator $\partial$ is the  vector space of 2-cycles which we denote
$H_2(AP_{\!\Pp},\RR)$.
It is well known that (up to isomorphism) the vector space $H_2(AP_{\!\Pp},\RR)$  is a topological invariant of $AP_{\!\Pp}$  that coincides with the second singular homology group of the branched surface $AP_{\!\Pp}$, see {\em e.g.}  \cite{Spa}.
The canonical orientation of the faces allows us to characterize the vector space
$H_2(AP_{\!\Pp},\RR)\subset C_2(AP_{\!\Pp},\RR)$ as follows.  A 2-chain is a 2-cycle if and only if for each edge $e$ the sum of the coordinates of the positive faces containing $e$ is equal to the sum of the coordinates of the negative faces.
This gives a set of $m$ linear equations with integer coefficients for $n$ variables (where $m$  is the dimension of $C_1(AP_{\!\Pp},\RR)$  and $n$ the dimension of $C_2(AP_{\!\Pp},\RR)$).
These equations are called the {\it switching rules}. 
Let us say that  a 2-cycle is {\it non-negative} if its coordinates are greater than or equal to zero and
denote by $H_2^+(AP_{\!\Pp},\RR)$, the closed cone of non-negative cycles, {\it i.e.} the closed  cone of cycles with non-negative coordinates and by $S_2(AP_{\!\Pp},\RR)$ the simplex made of all non-negative cycles whose
$\ell_1$-norm is 1. 
Finally, let us say that a 2-cycle is integral (resp. rational) if its coordinates are integers (resp.\,rational numbers). 

When  $\Omega_{\!\Pp} \neq \emptyset$, {\it i.e.}\,when one can tile $\RR^2$ with $\Pp$, 
 $\Omega_{\!\Pp}$ inherits  a natural metrizable topology. A metric $\delta$ defining this topology can be chosen as
follows.  Let $B_\epsilon(0)$ stand for the open ball with radius $\epsilon$ centered at $0$ in $\RR^d$  and $B_\epsilon[T]:= \{t_j\in T: t_j\cap \bar B_\epsilon(0) \neq \emptyset\}$ be the collection of tiles in $T$ which intersect $\bar B_\epsilon(0)$. 
Consider  two tilings $T$ and $T'$ in $\Omega_{\!\Pp}$ and let $A$ be the set of $\epsilon$ in
$(0, 1)$ such that there exist $u$ and $u'$  in $\RR^d$, with $\vert u\vert, \vert u'\vert \leq \epsilon/2$, so that
$B_{1/\epsilon}[T+u] = B_{1/\epsilon}[T'+u']$. Then
\[
\delta (T, T') \, = 
\begin{cases}
\inf A & \textup{if} \,\,A \neq \emptyset\\
1 & \textup{if\, not.}
\end{cases}
\]
In words, $T$ and $T'$ are close if, up to a small translation, they agree exactly in a large neighborhood of the origin.
Equipped with such a metric, $\Omega_{\!\Pp}$ is a compact metric space and $\RR^2$ acts by translation in
a continuous way. 
Let $\Mm(\Omega_{\!\Pp})$ be the set of finite translation-invariant measures on $\Omega_{\!\Pp}$. It is non-empty
because the group $\RR^2$ is amenable. The subset of translation-invariant probability measures
is denoted by $\Theta(\Omega_{\!\Pp})$.
There exists a natural morphism:
\[
\Ev: \Mm(\Omega_{\!\Pp})\to  C_2(AP_{\!\Pp},\RR)
\]
defined by:
\[
\Ev(\mu) \, =\, \sum_{j=1}^{j=n} \frac{\mu(\pi^{-1}(p_j))}{\lambda(p_j)}\ p_j
\]
where $\lambda$ stands for the Lebesgue measure in $\RR^2$ and  $\pi:\Omega_{\!\Pp}\to AP_{\!\Pp}$ is the natural projection which associates to each tiling the location of the origin of $\RR^2$ in the (translated copy of the) prototile it belongs to.
The switching rules are related to translation invariance: 
\begin{prop}\cite {BG}
\[
\Ev(\Mm(\Omega_{\!\Pp})) \subset  H_2^+(AP_{\!\Pp},\RR)\quad\textup{and}\quad\Ev(\Theta(\Omega_{\!\Pp})) \subset  S_2(AP_{\!\Pp},\RR).
\]
\end{prop}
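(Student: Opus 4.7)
The plan is to verify two things: that each coefficient $c_j:=\mu(\pi^{-1}(p_j))/\lambda(p_j)$ of $\Ev(\mu)$ is non-negative (immediate from $\mu\geq 0$ and $\lambda(p_j)>0$), and that the 2-chain $\Ev(\mu)=\sum_j c_j\,p_j$ satisfies the switching rules, so that it lies in $H_2^+(AP_{\!\Pp},\RR)$. The normalization required for $S_2(AP_{\!\Pp},\RR)$ will then drop out of $\mu(\Omega_{\!\Pp})=1$. Every step rests on the translation invariance of $\mu$ and on the local geometry of the branched surface $AP_{\!\Pp}$ near its 1-skeleton.

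The first preparatory step is a uniform-distribution lemma: for every Borel $U\subset\mathrm{int}(p_j)$,
\[
\mu(\pi^{-1}(U))=c_j\,\lambda(U).
\]
Both sides define Borel measures on $\mathrm{int}(p_j)$ that are invariant under the translations of $\RR^2$ sending $U$ to a subset of $p_j$; hence each is proportional to the restriction of $\lambda$, and the constant is pinned to $c_j$ by the case $U=\mathrm{int}(p_j)$.

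Given this lemma, the switching rule at a 1-cell $e$ becomes a flux identity. Orient $e$ and let $n\in\RR^2$ be the unit normal pointing from the positive side of $e$ to the negative side. For a sub-segment $\sigma$ strictly inside $e$ and $\eta>0$ small, let $N_\eta^{\pm}(\sigma)\subset AP_{\!\Pp}$ be the union over positive (resp.\ negative) 2-cells containing $e$ of the interior $\eta$-strip along $\sigma$. The uniform-distribution lemma gives
\[
\mu(\pi^{-1}(N_\eta^{\pm}(\sigma)))=\Bigl(\sum_{p_j\text{ pos/neg}}c_j\Bigr)\,|\sigma|\,\eta+O(\eta^2).
\]
The translation $\Phi_\eta:T\mapsto T+\eta\,n$ is $\mu$-preserving (translation invariance), and for $\eta$ small it realizes a bijection $\pi^{-1}(N_\eta^+(\sigma))\to\pi^{-1}(N_\eta^-(\sigma))$: translating by $\eta\,n$ pushes the origin of $T$ across $e$, from its positive adjacent tile into the matching negative adjacent tile, without changing the coordinate along $e$. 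Equating the two measures, dividing by $|\sigma|\,\eta$, and letting $\eta\to 0$ yields the identity $\sum_{\mathrm{pos}}c_j=\sum_{\mathrm{neg}}c_j$, which is the switching rule at $e$. For a probability $\mu$, the uniform-distribution lemma applied to exhausting strips of the 1-skeleton (which has $\lambda$-measure zero) shows that $\{\pi^{-1}(\mathrm{int}(p_j))\}_j$ partitions $\Omega_{\!\Pp}$ up to a $\mu$-null set, so $\sum_j c_j\lambda(p_j)=\mu(\Omega_{\!\Pp})=1$, placing $\Ev(\mu)$ in $S_2(AP_{\!\Pp},\RR)$.

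The delicate point is the bijection step: one must describe the neighborhood of $e$ in $AP_{\!\Pp}$ as a book of half-disks---one page per 2-cell incident to $e$, split into positive and negative pages by the orientation of $e$---and check that $\Phi_\eta$ exchanges the positive pages with the negative ones page-by-page. Keeping $\sigma$ strictly interior to $e$ eliminates boundary effects at the endpoints of $\sigma$, and everything else is a direct consequence of the $\RR^2$-invariance of $\mu$ combined with the uniform-distribution lemma of the second paragraph.
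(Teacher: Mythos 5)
The paper itself gives no proof of this proposition: it is imported verbatim from Benedetti--Gambaudo \cite{BG}, where it arises in the formalism of $\mathbb{G}$-solenoids (the switching rules there express the holonomy invariance of the transverse measure that $\mu$ induces on a box decomposition of $\Omega_{\!\Pp}$). Your argument is a direct, self-contained version of the same mechanism, and its two main steps are sound: the local uniform-distribution lemma ($\pi_*\mu$ restricted to the interior of each $2$-cell is invariant under all partially defined translations, hence a multiple of Lebesgue with constant $c_j$), and the flux identity obtained by transporting the thin collar on the positive side of an edge occurrence onto the collar on the negative side by a small translation. Three remarks. First, with the convention $\pi(T+v)=\pi(T)-v$ the map you want is $T\mapsto T-\eta n$, not $T+\eta n$; this is harmless (it is a measure-preserving bijection either way), but worth getting straight since the whole point is which side of $e$ the origin crosses to. Second, pinning the constant by $U=\mathrm{int}(p_j)$ and showing that the sets $\pi^{-1}(\mathrm{int}(p_j))$ exhaust $\Omega_{\!\Pp}$ both require $\mu(\pi^{-1}(\text{1-skeleton}))=0$; interior collars alone do not give this, but a generic small translation sends $\pi^{-1}(\partial p_j)$ into the union of the interior collars, whose measure is $O(\eta)$, and invariance then forces the boundary set to be null --- you should say this rather than ``exhausting strips''. (Note also that an edge class $e$ may occur several times on the boundary of a single $2$-cell; your collars count these occurrences with multiplicity, which is exactly what the switching rule requires.) Third, the only real gap as written: you establish $\sum_j c_j\lambda(p_j)=1$, but the $\ell_1$-norm of $\Ev(\mu)$ is $\sum_j c_j$, so membership in $S_2(AP_{\!\Pp},\RR)$ as defined (unit $\ell_1$-norm) does not follow unless every prototile has unit area. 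This mismatch is really a normalization slip in the statement itself --- it evaporates once everything is reduced to Wang tiles, where $\lambda(p_j)=1$ --- but your closing sentence is a non sequitur for general $\Pp$: either renormalize $\Ev(\mu)$ by its $\ell_1$-norm or read $S_2$ as the $\lambda$-weighted simplex.
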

It turns out that the previous inclusions may not be onto: 
one can find a set  of prototiles and  non-negative coefficients  associated with these prototiles satisfying the switching rules which are not the weights of some finite translation-invariant measure (see \cite{CGHU}).
Clearly the set $\Ev(\Theta(\Omega_{\!\Pp}))$ is a closed convex subset of $S_2(AP_{\!\Pp},\RR), $ and the aim of this paper is to characterize this subset, that is to say to give geometric conditions on the non-negative  2-cycles  ensuring that they are images of finite measures. 
We shall construct  a non-negative continuous convex function $S_2(AP_{\!\Pp},\RR) \ni c \mapsto  \tn c\tn  \in \RR$
which we call the {\it asymptotic Thurston semi-norm}. Our main result is the following theorem:
\begin{theo} \label{main} 
\leavevmode\\
A collection of prototiles $\Pp$ tiles the plane if and only if  $H_2^+(AP_{\!\Pp},\RR) \neq \{0\}$
and the asymptotic Thurston semi-norm has a zero in   $S_2(AP_{\!\Pp},\RR).$ In this case
\[
c\in\Ev(\Theta(\Omega_{\!\Pp}))\Longleftrightarrow  \tn c \tn =0.
\]
\end{theo}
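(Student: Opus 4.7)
The plan is to interpret the vanishing $\tn c\tn = 0$ geometrically as the existence of a F\o lner sequence of legal finite patches of $\Pp$-tiles whose normalised $2$-chains converge to $c$ in $S_2(AP_{\!\Pp},\RR)$. I expect the construction of $\tn\cdot\tn$ to proceed by first defining, for each integral non-negative $2$-cycle $c$, a combinatorial Thurston-type cost $\|c\|_T$ equal to a suitably normalised count of glueing defects of any realisation of $c$ by a finite legal patch of prototiles, and then setting $\tn c\tn := \lim_n \|nc\|_T/n$, extended by homogeneity and continuity to $S_2(AP_{\!\Pp},\RR)$. Under this reading, $\tn c\tn = 0$ is equivalent to the existence of finite legal patches $(P_n)_n$ whose boundary length is asymptotically negligible compared to their area and whose prototile frequency vectors converge to $c$ inside $S_2(AP_{\!\Pp},\RR)$.

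For the ``only if'' direction, assume $\Omega_{\!\Pp}\neq\emptyset$. Amenability of $\RR^2$ gives $\Theta(\Omega_{\!\Pp})\neq\emptyset$; the proposition quoted from \cite{BG} then yields $\Ev(\Theta(\Omega_{\!\Pp})) \subset S_2(AP_{\!\Pp},\RR)$, so in particular $H_2^+(AP_{\!\Pp},\RR)\neq\{0\}$. Fix any $\mu\in\Theta(\Omega_{\!\Pp})$: a pointwise ergodic theorem for $\RR^2$-actions along the F\o lner sequence of Euclidean balls shows that for $\mu$-a.e.\ tiling $T$ the normalised empirical $2$-chain of $B_R[T]$ converges to $\Ev(\mu)$ in $S_2(AP_{\!\Pp},\RR)$ as $R\to\infty$. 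Since the boundary-to-area ratio of $B_R$ is $O(1/R)$, these patches directly witness $\tn \Ev(\mu)\tn = 0$.

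For the ``if'' direction, suppose $c\in S_2(AP_{\!\Pp},\RR)$ satisfies $\tn c\tn = 0$. By homogeneity, continuity of $\tn\cdot\tn$ and closedness of $\Ev(\Theta(\Omega_{\!\Pp}))$, it suffices to work with F\o lner patches $(P_n)$ with $|\partial P_n|/|P_n|\to 0$ whose normalised $2$-chains converge to $c$. Translate each $P_n$ by a point $u_n$ chosen uniformly at random inside $P_n$: the law of the pointed patch $P_n - u_n$, embedded in a suitable compactification of $\Omega_{\!\Pp}$, defines a Borel probability measure $\mu_n$. By Prokhorov I extract a weak-$*$ subsequential limit $\mu$; translation invariance of $\mu$ follows from the F\o lner property, since for any continuous $\varphi$ and fixed $v\in\RR^2$ the discrepancy $\int\varphi\,d\mu_n - \int\varphi\circ\tau_v\,d\mu_n$ is controlled by the measure of the $|v|$-neighbourhood of $\partial P_n$ divided by $|P_n|$, which tends to $0$. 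Weak-$*$ continuity of $\Ev$ then gives $\Ev(\mu)=c$, and the vanishing boundary-to-area ratio forces $\mu$ to charge only complete plane tilings, so $\Omega_{\!\Pp}\neq\emptyset$.

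The final equivalence in the theorem is then immediate: from $\mu\in\Theta(\Omega_{\!\Pp})$ with $\Ev(\mu)=c$ the ``only if'' argument produces F\o lner patches showing $\tn c\tn = 0$; conversely the construction above produces $\mu\in\Theta(\Omega_{\!\Pp})$ with $\Ev(\mu)=c$ whenever $\tn c\tn = 0$. The main obstacle I anticipate is justifying that the weak-$*$ limit $\mu$ really sits inside $\Omega_{\!\Pp}$ rather than on partial or branched-surface limits: since the F\o lner patches are only locally legal, one needs the precise definition of $\tn\cdot\tn$ to be calibrated so that the vanishing boundary-to-area ratio, combined with the uniform random translation, pushes the glueing defects off to infinity. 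Making this escape argument watertight inside the branched-surface framework of $AP_{\!\Pp}$ is where the technical heart of the proof should lie.
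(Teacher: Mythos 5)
Your proposal rests on a definition of $\tn\cdot\tn$ that is not the one the paper uses, and the difference is exactly where the difficulty of the theorem lies. In the paper, for an integral cycle $c$ one sets $\Vert c\Vert=-\max_{\Ff\in\hat c}\chi(\Ff)$, where $\hat c$ ranges over \emph{closed} oriented surfaces built from the prototiles glued full edge to full edge (respecting colours) and carried by the branched surface $AP_{\!\Pp}$, and $\tn c\tn$ is the subadditive limit $\lim_n\Vert nc\Vert/n$. You instead posit that $\tn c\tn=0$ means the existence of legal \emph{planar} patches with boundary-to-area ratio tending to zero and frequency vectors converging to $c$. These two notions are not obviously equivalent: a closed surface in $\hat{n c}$ with $|\chi|$ small relative to its area is in general a high-genus translation surface with cone singularities, and extracting from it large simply connected planar legal patches is essentially the entire content of the hard implication. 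The paper does this by discarding the at most $K(p)\,|\chi(\Ff_\ell)|$ tiles within combinatorial distance $p$ of a singularity, lifting the remaining chain to the refined Wang system $\Ww^p$ whose tiles encode legal $p$-neighbourhoods, and invoking the identification $\Ev(\Theta(\Omega_\Ww))=\bigcap_{p>0}\pi^{(p)}(H_2^+(AP_{\Ww^p},\RR))$ from \cite{BG,BBG}; this is also why the preliminary reduction to Wang prototiles (Lemma~\ref{reduction}), which you skip, is needed. By building the desired characterisation into the definition of $\tn\cdot\tn$, your ``if'' direction begs the question.

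The ``only if'' direction has the mirror-image problem: under the paper's actual definition you must exhibit closed surfaces of small Euler characteristic representing multiples of $\Ev(\mu)$, not merely Birkhoff patches with small boundary; some capping-off or periodisation is required. The paper periodises the central pattern of a $\mu$-generic tiling to obtain a torus in the colour-forgotten system $\Omega_{\widehat\Ww}$ and concludes by continuity of $\tn\cdot\tn$. Your random-translation/Prokhorov construction of an invariant measure from F\o lner patches is sound as far as it goes, and would genuinely bypass the inverse-limit machinery if it applied --- but it only starts once you already have arbitrarily large planar legal patches, and that is precisely the step your proposal does not supply.
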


\section{The asymptotic Thurston semi-norm}

\noindent $\bullet$  {\bf The Thurston semi-norm:}  We start with a brief historical account on the Thurston semi-norm, in order to shed some light on the ideas at the origin of this work.
If $M$ is a closed ({\em i.e.} compact, without boundary)
$3$-manifold, any closed surface $\Ff$ embedded in $M$ defines an integer $2$-cycle in $C_2(M;\mz)$, and thus
a homology class $[\Ff] \in H_2(M;\mz)$, the second homology group of $ M$. Relying upon this observation,
W.P. Thurston introduced in \cite{T} a  semi-norm
$\|\cdot\| \colon H_2(M;\mz) \rightarrow \mn$
defined  by:
$$
\|u\| = \mathrm{ min } \{|\chi(\Ff)| \mathrm{ ; } [\Ff] = u\},
$$ where $\chi (.)$ stands for the Euler characteristic\footnote{We recall that the Euler characteristic of a surface is the sum of the Euler characteristics of its connected components.}.
This semi-norm vanishes on the classes which can be represented by tori, and only on these classes. Thurston (semi)-norm has been intensively studied,
see for instance \cite{McMullen, Oertel, Fried} among many others. D. Gabai \cite{Gabai} (see also \cite{Pearson}) proved that this norm does not change if one considers singular
surfaces ({\em i.e.} just ``mapped in'' surfaces, instead of embedded ones) in $M$. On another hand, U. Oertel in \cite{Oertel} used {\em branched surfaces} to describe the unit-ball of this norm: non-negative integer solutions to the so-called ``switch-equations'' define integer
$2$-cycles which in turn give surfaces embedded
in the $3$-manifold $M$, and carried by the branched surface $W$ (non-negative real solutions yield measured laminations).
When $W$ {\em is} the ambient space, i.e.\,is no more assumed to be embedded in a $3$-manifold, an integer $2$-cycle of a branched surface $W$ still defines a surface mapped in $W$, and carried by $W$ provided that the cycle is non-negative.  It is thus natural to try to extend the definition of the Thurston norm when dealing with ``abstract'' branched surfaces. Unfortunately, it happens that the multiplicativity fails
in this setting. This leads us to adapt the definition by considering an asymptotic version
$\displaystyle \tn u\tn = \lim_{n \to +\infty} \frac{\| nu\|}{n}$ (compare with \cite{Calegari}).

\noindent $\bullet$  {\bf The asymptotic Thurston semi-norm:}

Consider a topological oriented  surface $\Ff$ made with finitely many polygons of the Euclidean plane $t_1,  \dots, t_s $   glued along their edges, full edge to full edge.   We say that $\Ff$ is a cover of $AP_{\!\Pp}$ if there exists a continuous cellular map 
  $\pi: \Ff \to AP_{\!\Pp}$ such that:
  \begin{itemize}
  \item for each $i\in \{1, \dots s\}$, there exists $k(i) \in \{1, \dots , n\}$ such that  $\pi$  is an isometry between
  $\textup{int}(t_i)$ and  $\textup{int}(p_{k(i)})$;
\item  $\pi$ is a local embedding. 
\end{itemize}
The orientation of  $\Ff$ induces an orientation of each polygon $t_i$ in $\Ff$ which may fit or not with the canonical orientation of the polygon $p_{k(i)}$. We set 
\[
\epsilon_i  \, =
\begin{cases}
1 & \textup{if the orientation is preserved}\\
-1 & \textup{if not}
\end{cases}
\]

and define for each $j\in \{1, \dots, n\}$:
$$c_j = \sum_{i\, \vert \, k(i) =j} \epsilon_j.$$
Clearly the vector $c(\Ff) \equiv  \sum_{j=1}^{j=n} c_j\cdot p_j$, is a cycle in $ H_2(AP_{\!\Pp}; \mz)$.

 For each cycle  $c \neq 0$ in $ H_2(AP_{\!\Pp}; \mz)$, we denote by $ \hat c$ the set of  all compact oriented  surfaces $\Ff$ that are covers of the flat branched surface  $AP_{\!\Pp}$ and whose associated 2-cycles $c_{\Ff}$  is equal to $c$.
\begin{lemma} \label{bla} For any cycle $c = \sum_{j=1}^{j=n} c_j\cdot p_j \neq 0$ in $ H_2(AP_{\!\Pp}; \mz)$ ,   there exists a surface $\Ff$ in $\hat c$ made with exactly $\vert c_j\vert$  copies of $p_j$, for each $j = 1, \dots, n$.
\end{lemma}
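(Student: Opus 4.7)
The natural plan is to build $\Ff$ by direct specification. For each $j \in \{1,\dots,n\}$ I would take $|c_j|$ disjoint copies of the polygon $p_j$, endowed with the canonical orientation of $p_j$ if $c_j \geq 0$ and with its reverse if $c_j < 0$. Labelling these oriented polygons $t_1,\dots,t_s$ with $k(i)$ denoting the index of the prototile they copy and setting $\epsilon_i = \mathrm{sign}(c_{k(i)})$, the identity
\[
c(\Ff) \;=\; \sum_{j=1}^{n} \Big(\sum_{i\,\vert\,k(i)=j} \epsilon_i\Big)\, p_j \;=\; \sum_{j=1}^{n} |c_j|\,\mathrm{sign}(c_j)\, p_j \;=\; c
\]
will hold by construction; the real content of the proof is then to glue these polygons into a compact oriented surface which is a cover of $AP_{\!\Pp}$.

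To perform the gluing, fix an orientation on each 1-cell $e$ of $AP_{\!\Pp}$ and, above each such $e$, classify the boundary edges of the $t_i$'s projecting to $e$ as \emph{positive} or \emph{negative}, according to whether the orientation they inherit from the oriented polygon $t_i$ (which in general differs from the canonical orientation of $p_{k(i)}$) matches or opposes the chosen orientation of $e$. Unpacking the switching rule $\partial c = 0$ at each edge $e$ translates exactly into the equality between the number of positive and the number of negative boundary edges above $e$. I would then pair positive with negative arbitrarily and glue each pair by the unique isometry projecting to the identity on $e$. Every 1-cell of the resulting CW-complex is adjacent to exactly two 2-cells which induce opposite orientations on it, so one obtains an oriented pseudomanifold; splitting each vertex whose link is a disjoint union of several circles into one vertex per circle yields a compact oriented 2-manifold $\Ff$.

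The main obstacle, and the point at which the proof needs actual care, is checking that the natural map $\pi : \Ff \to AP_{\!\Pp}$ is a local embedding and not merely continuous and cellular. Injectivity on the interior of each $t_i$ is built in; injectivity across an edge $e'$ of $\Ff$ amounts to the requirement that the two adjacent 2-cells do not map to the same ``page'' of $AP_{\!\Pp}$ at $\pi(e')$. The crucial observation is that since all $|c_j|$ copies of any given $p_j$ are oriented with the same sign $\mathrm{sign}(c_j)$, any two boundary edges that project onto the same edge of the same prototile $p_j$ by the very same isometry necessarily share the positive/negative label and are therefore never paired, which rules out local folding along edges. Local injectivity at vertices is then handled by the vertex-splitting operation, after which $\Ff$ satisfies all the conditions defining a cover of $AP_{\!\Pp}$ and lies in $\hat c$ with the required number of copies of each prototile.
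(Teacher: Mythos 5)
Your construction is essentially the paper's own proof (take $|c_j|$ copies of $p_j$ oriented by the sign of $c_j$ and glue matching edges with opposite induced boundary orientations), and you correctly supply the one point the paper leaves implicit, namely that $\partial c=0$ forces the positive and negative boundary edges over each $1$-cell to be equinumerous, so the pairing exhausts all edges. The only caveat is your last claim: vertex-splitting makes $\Ff$ a surface but does not make $\pi$ injective near a vertex whose link wraps several times around the link downstairs (a cone point); such points are unavoidable whenever $\chi(\Ff)<0$, so ``local embedding'' must be read away from the $0$-skeleton, as the paper itself tacitly does.
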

\begin{proof}  A proof of this Lemma is already given in \cite{G} for non-negative cycles, and the same arguments work here.  For each $j\in \{1, \dots, n\}$ consider $\vert c_j\vert$ copies of $p_j$ with orientation given by the sign of $c_j$. Let $p_{k_0}$ and $p_{k_1}$ be two distinct polygons chosen in the collection of $\sum_{j=1}^{j=n} \vert c_j\vert$ polygons. 

An edge $e_{k_0}$ of  $p_{k_0}$ can be glued to an edge $e_{k_1}$  of $p_{k_1}$   if and only if:
\begin{itemize}
\item  they have the same color;
\item they are translated copies one of the other; 
\item their orientations  (induced by the orientations of the polygons) are different.
\end{itemize}
If both polygons have  the same orientation, the gluing is made by the translation that maps  $e_{k_0}$ to $e_{k_1}$. If the two polygons have different orientations, the gluing is made by post-composing the translation that maps  $e_{k_0}$ to $e_{k_1}$ with the orthogonal symmetry with axis $e_{k_1}$.
We perform this series of gluing as long as possible until exhaustion of the polygons in the collection. We have thus built a compact oriented (not necessarily connected) surface $\Ff$ such that $c(\Ff) = c$. \end{proof}

For an integral cycle different from $0$, let
\[
\Vert c\Vert\,\equiv\, - \max _{\Ff\in \hat c}(\chi (\Ff))
\]
and set
\[
\Vert 0\Vert\,\equiv\, 0.
\]
\begin{rem} \label{tore}For any integral cycle $c$ different from $0$:
\[
\Vert c\Vert =0\Longleftrightarrow \hat c \;\; \textup{contains a torus.}
\]
\end{rem}
 For any  integer $n>0$ and any pair of  integral cycles $c,c^\prime$, we have:
\[
\Vert n c\Vert\leq n\Vert c\Vert\quad\textup{and} \quad \Vert c+c^\prime\Vert \leq  \Vert c\Vert+\Vert c^\prime\Vert.
\]

It follows that for any  integral cycle $c$,  the limit  of the sequence  $((1/n)\Vert n c\Vert)_n$ exists.  Hence
we define:
\[
\tn c\tn \,\equiv\, \lim_{n\to +\infty}\frac{1}{n}\Vert n c\Vert.
\]
We observe that for each integer $n>0$, and each pair of integral cycles $c,c^\prime$, we have: 
\[
\tn n c\tn =n \tn c\tn \quad \textup{and} \quad \tn c+c^\prime\tn \leq  \tn c\tn+ \tn c^\prime\tn.
\]
Thus, we can  extend the definition of the asymptotic Thurston semi-norm $\tn \cdot \tn$ to  
the cycles in $H_2(AP_{\!\Pp}; \mq)$ by setting:
\[
\tn c\tn \equiv \frac{1}{n}\tn n c\tn
\]
for all integer $n$ such that $n c$ is an integral cycle.  

\begin{rem}
For any cycle $c \in H_2(AP_{\!\Pp}; \mq),$ $\tn c\tn =0$  does not imply that $\hat c$ contains a torus.
\end{rem}

\begin{lemma} There exists a constant $C_{\!\Pp}>0$ such that for any cycle in  $H_2(AP_{\!\Pp},\mq)$,  we have
\[
 \tn c\tn \leq C_{\!\Pp}  \vert c\vert.
\]
\end{lemma}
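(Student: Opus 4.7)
The plan is to extract a linear-in-$\vert c\vert$ upper bound on $\Vert c\Vert$ for integral cycles by constructing an explicit cover in $\hat c$ via Lemma~\ref{bla}, and then to propagate this bound through the very definition of $\tn\cdot\tn$.

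For a non-zero integral cycle $c = \sum_j c_j\, p_j$, Lemma~\ref{bla} supplies a compact oriented surface $\Ff \in \hat c$ made of exactly $\vert c_j\vert$ copies of $p_j$, for each $j$. Let $E_{\max}$ denote the maximal number of edges of any prototile in $\Pp$. The number of $2$-cells of $\Ff$ equals $F = \sum_j \vert c_j\vert = \vert c\vert$. Before gluing, the polygons together contribute at most $E_{\max}\vert c\vert$ edges and at most $E_{\max}\vert c\vert$ vertices; since gluing only identifies edges (resp.\ vertices) in pairs, one has $E(\Ff) \leq E_{\max}\vert c\vert$ and $V(\Ff) \leq E_{\max}\vert c\vert$. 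Hence
\[
\vert \chi(\Ff)\vert = \vert V - E + F\vert \leq V + E + F \leq (2E_{\max}+1)\,\vert c\vert.
\]
Since $\Vert c\Vert = -\max_{\Ff'\in \hat c}\chi(\Ff') \leq -\chi(\Ff) \leq \vert \chi(\Ff)\vert$, this yields $\Vert c\Vert \leq (2E_{\max}+1)\vert c\vert$ for every integral $c$.

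To promote the estimate to the asymptotic semi-norm, applying it to $nc$ for each positive integer $n$ gives $\tn c\tn = \lim_n \frac{1}{n}\Vert nc\Vert \leq (2E_{\max}+1)\vert c\vert$ when $c$ is integral. For a rational cycle $c$, choose $n$ so that $nc$ is integral; then $\tn c\tn = \frac{1}{n}\tn nc\tn \leq \frac{1}{n}(2E_{\max}+1)\vert nc\vert = (2E_{\max}+1)\vert c\vert$, so one may take $C_{\!\Pp} := 2E_{\max}+1$.

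The argument is really a bookkeeping of cells, so I do not foresee a serious obstacle. What makes the bound linear in $\vert c\vert$ is precisely that Lemma~\ref{bla} furnishes a cover whose total cell count is exactly $\vert c\vert$, rather than some larger multiple that the combinatorics of the switching rules might a priori force; once one has such an economical representative, the Euler characteristic estimate is immediate.
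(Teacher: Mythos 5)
Your proof is correct and follows essentially the same route as the paper: both use Lemma~\ref{bla} to produce a surface in $\hat{c}$ with exactly $\vert c_j\vert$ copies of each prototile, bound $\vert\chi(\Ff)\vert$ linearly in $\vert c\vert$ by counting cells, and then pass to the asymptotic semi-norm via $nc$ and the rational rescaling. The only difference is cosmetic (your constant $2E_{\max}+1$ versus the paper's $s_{\!\sPp}$, and your more explicit bookkeeping of $V$, $E$, $F$).
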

\begin{proof}
Let $c' = n  c$ be a cycle in  $H_2(AP_{\!\Pp},\mz)$ different from $0$, that reads $c' = \sum_{j=1}^{j=n} c_i\cdot p_i$. Following Lemma \ref{bla}, one can find a surface $\Ff$ in $\hat c$ made with exactly $\vert c_j\vert$  copies of $p_j$, for each $j = 1, \dots, n$. Let $s_{p_j}$ be the  number of vertices of the polygons in $\Pp$ and set
$s_{\!\sPp} = \max_{j=1, \dots, n} s_{p_j}$. We clearly have:
\[
\vert \chi (\Ff)\vert \leq s_{\!\sPp} \vert c'\vert
\]
whence
\[
\Vert c'\Vert  \leq s_{\!\sPp} \vert c\vert.
\]
It follows that:
\[
\tn c\tn \leq s_{\!\sPp} \vert c\vert.
\]

\end{proof}

\begin{cor}\label{conti}
The asymptotic Thurston semi-norm  $\tn\cdot\tn$ is Lipschitz continuous  on  $H_2(AP_{\!\Pp}; \mq)$.
\end{cor}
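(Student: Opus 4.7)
The plan is to deduce Lipschitz continuity from the linear bound $\tn c\tn \leq C_{\!\Pp}\,|c|$ established in the previous lemma, together with the subadditivity property $\tn c + c'\tn \leq \tn c\tn + \tn c'\tn$ (which is already stated for integral cycles in the excerpt). This is the usual mechanism: any subadditive, positively homogeneous, symmetric functional that is bounded above by a norm is automatically Lipschitz with respect to that norm.

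First I would check that subadditivity and homogeneity extend from $H_2(AP_{\!\Pp};\mz)$ to $H_2(AP_{\!\Pp};\mq)$. Given rational cycles $c,c'$, choose a common positive integer $N$ such that $Nc$ and $Nc'$ are integral. Then $N(c+c')$ is also integral, and from $\tn N(c+c')\tn \leq \tn Nc\tn + \tn Nc'\tn$ we divide by $N$ using the defining relation $\tn c\tn = \tfrac{1}{N}\tn Nc\tn$ to obtain $\tn c+c'\tn \leq \tn c\tn + \tn c'\tn$ on $H_2(AP_{\!\Pp};\mq)$. Similarly, $\tn\lambda c\tn = |\lambda|\,\tn c\tn$ for $\lambda \in \mq$: the case $\lambda \geq 0$ is handled by writing $\lambda = p/q$ and using homogeneity with respect to $p$ and $q$; the case $\lambda = -1$ follows from the observation that reversing the orientation of each surface $\Ff \in \hat c$ produces a surface in $\widehat{-c}$ with the same Euler characteristic, so that $\|-c\| = \|c\|$ at the integral level and hence $\tn -c\tn = \tn c\tn$.

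With these in hand, the reverse triangle inequality gives
\[
\bigl|\tn c\tn - \tn c'\tn\bigr| \leq \tn c - c'\tn
\]
for all $c,c' \in H_2(AP_{\!\Pp};\mq)$. Combining this with the previous lemma applied to the rational cycle $c-c'$ yields
\[
\bigl|\tn c\tn - \tn c'\tn\bigr| \leq \tn c - c'\tn \leq C_{\!\Pp}\,|c - c'|,
\]
which is exactly the Lipschitz estimate with constant $C_{\!\Pp}$.

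The only mildly delicate step is the bookkeeping needed to promote the integral-level inequalities to the rational-level ones, i.e.\ checking that the definition $\tn c\tn = \tfrac{1}{n}\tn nc\tn$ is consistent (independent of the choice of $n$) and compatible with the triangle inequality; everything else is a direct consequence of the previous lemma. I do not expect a genuine obstacle here, as no new geometric input is required beyond what has already been set up.
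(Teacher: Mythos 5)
Your proposal is correct and is exactly the argument the paper intends (the corollary is stated without proof as an immediate consequence of the preceding lemma): subadditivity and homogeneity, transported to rational cycles via the defining relation $\tn c\tn = \tfrac{1}{n}\tn nc\tn$, give the reverse triangle inequality, and combining with $\tn c-c'\tn \leq C_{\!\Pp}\,|c-c'|$ yields the Lipschitz bound. Your extra care with the symmetry $\tn -c\tn=\tn c\tn$ (via orientation reversal) and with the consistency of the rational extension is a welcome filling-in of details the paper leaves implicit.
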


It follows  that the function $c\mapsto \tn c\tn$ can be extended to a non-negative continuous function defined on $H_2(AP_{\!\Pp}; \RR)$, and thus in particular on the whole simplex $S_2(AP_{\!\Pp},\RR)$.
We call it the {\it asymptotic Thurston map}.
From the subadditivity of the map $c\mapsto \tn c\tn$ we easily get:
\begin{lemma} \label{continue} When restricted to $S_2(AP_{\!\Pp},\RR)$, the asymptotic Thurston map is a bounded convex continuous non-negative map.
\end{lemma}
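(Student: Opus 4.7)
The plan is to extend the two defining algebraic properties of $\tn\cdot\tn$ (positive homogeneity and subadditivity) from $H_2(AP_{\!\Pp};\mq)$ to $H_2(AP_{\!\Pp};\RR)$ by continuity, and then observe that on any convex set these two properties together force convexity. Continuity, non-negativity, and boundedness will follow directly from Corollary~\ref{conti} and the previous lemma.

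First I would fix $c,c'\in H_2(AP_{\!\Pp};\RR)$ and approximate them by sequences $c_k,c'_k\in H_2(AP_{\!\Pp};\mq)$ with $c_k\to c$ and $c'_k\to c'$ (possible since $\mq$-cycles are dense in the $\RR$-cycles). For any positive rational $t$, the identity $\tn t c_k\tn = t\tn c_k\tn$ holds on rational cycles (by the homogeneity already established when $nc$ is integral, applied to write $t=p/q$ and use both $p$-homogeneity and the extension by $1/q$), and the subadditivity $\tn c_k+c'_k\tn\le \tn c_k\tn+\tn c'_k\tn$ holds verbatim. Passing to the limit in $k$ using the Lipschitz continuity of $\tn\cdot\tn$ given by Corollary~\ref{conti} yields $\tn t c\tn=t\tn c\tn$ for every positive rational $t$ and every $c\in H_2(AP_{\!\Pp};\RR)$, and then a second approximation in $t$ (still by Lipschitz continuity) upgrades this to all real $t\ge 0$. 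The same argument gives $\tn c+c'\tn\le \tn c\tn+\tn c'\tn$ on $H_2(AP_{\!\Pp};\RR)$.

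With positive homogeneity and subadditivity in hand on the real vector space, convexity on $S_2(AP_{\!\Pp},\RR)$ is immediate: for any $c,c'\in S_2(AP_{\!\Pp},\RR)$ and $\theta\in[0,1]$,
\[
\tn \theta c+(1-\theta)c'\tn \le \tn \theta c\tn+\tn (1-\theta)c'\tn = \theta\tn c\tn+(1-\theta)\tn c'\tn,
\]
which is exactly the convexity inequality. Non-negativity is inherited from the definition of $\Vert\cdot\Vert$ on integral cycles (hence on rational cycles, hence on the continuous extension). Boundedness on $S_2(AP_{\!\Pp},\RR)$ follows from the previous lemma: every $c$ in the simplex has $|c|=1$, so $\tn c\tn\le C_{\!\Pp}$. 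Continuity on $S_2(AP_{\!\Pp},\RR)$ is the restriction of the Lipschitz extension to $H_2(AP_{\!\Pp};\RR)$ given by Corollary~\ref{conti}.

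I do not expect a real obstacle here; the only point requiring a bit of care is the double approximation argument used to extend positive homogeneity to real scalars, because one has to commute two limits (in the cycle and in the scalar). The uniform Lipschitz bound makes this routine: writing $\tn tc\tn-t\tn c\tn=(\tn tc\tn-\tn t_k c_k\tn)+(t_k\tn c_k\tn - t\tn c\tn)$ and controlling each piece by $C_{\!\Pp}$ times $|tc-t_k c_k|$ and similar. Once that step is carried out cleanly, the remaining assertions of the lemma are almost formal consequences of the work already done in the section.
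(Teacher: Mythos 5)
Your proposal is correct and follows the same route the paper intends: the paper's own ``proof'' is just the one-line observation that convexity follows from the subadditivity (and homogeneity) of $c\mapsto\tn c\tn$ together with the Lipschitz extension from Corollary~\ref{conti}, and your argument fills in exactly those steps. The double-approximation detail you flag for extending positive homogeneity to real scalars is handled correctly by the uniform Lipschitz bound, and the remaining assertions (boundedness via $\tn c\tn\le C_{\!\Pp}\vert c\vert$ on the simplex, non-negativity, continuity) are treated at the same level of rigor as in the paper.
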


\begin{cor} When restricted to $S_2(AP_{\!\Pp},\RR)$, the set of zeros of the asymptotic Thurston map is a  (possibly empty) convex  subset of the simplex $S_2(AP_{\!\Pp},\RR)$.
\end{cor}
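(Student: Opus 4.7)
The plan is to deduce the corollary directly from Lemma \ref{continue}, which tells us that the restriction of the asymptotic Thurston map to $S_2(AP_{\!\Pp},\RR)$ is convex, continuous, and non-negative. The statement is really just the observation that the zero set of a non-negative convex function on a convex domain is itself convex (and, as a bonus, closed).

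First I would note that the zero set $Z := \{c \in S_2(AP_{\!\Pp},\RR) : \tn c \tn = 0\}$ is by definition a subset of $S_2(AP_{\!\Pp},\RR)$, so there is nothing to prove about it being a subset of the simplex. It is closed in $S_2(AP_{\!\Pp},\RR)$ because $\tn\cdot\tn$ is continuous and $\{0\}$ is closed in $\RR$, though the corollary does not require this.

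For convexity, I would take two elements $c_0, c_1 \in Z$ and any $t \in [0,1]$. The convex combination $c_t := (1-t)c_0 + t c_1$ belongs to $S_2(AP_{\!\Pp},\RR)$ because the simplex is convex. By the convexity statement in Lemma \ref{continue},
\[
0 \leq \tn c_t \tn \leq (1-t)\tn c_0\tn + t\tn c_1\tn = 0,
\]
so $\tn c_t\tn = 0$ and $c_t \in Z$. Hence $Z$ is convex. The possibility that $Z = \emptyset$ is left open, since nothing so far guarantees existence of a zero; indeed, the whole point of Theorem \ref{main} is that non-emptiness of $Z$ is equivalent to tileability of the plane by $\Pp$.

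There is no real obstacle here: the statement is a one-line consequence of non-negativity combined with convexity, so the only care needed is to cite Lemma \ref{continue} correctly and to observe that $S_2(AP_{\!\Pp},\RR)$ is itself convex, so convex combinations of elements of $Z$ remain in the domain where the map is defined.
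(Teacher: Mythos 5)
Your argument is correct and is exactly the intended one: the paper states this corollary as an immediate consequence of Lemma \ref{continue}, and the standard observation that the zero set of a non-negative convex function on a convex domain is convex is all that is needed. Nothing is missing.
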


\section{Wang tilings }

We start with some basic definitions. 
Let $\Ww= \{w_1, \dots , w_n\}$ be a finite collection of unit squares whose vertices have integer coordinates in $\RR^2$ and colored edges. We say that $\Ww$ is a collection of  {\it Wang prototiles}. 
A Wang tiling is a tiling made with $\Ww$ such that  abutting edges of adjacent tiles have the same color.
In 1966, R. Berger \cite {Berger} gave a first example of a set of Wang prototiles which can tile the plane but cannot tile it periodically.  This example was made with a collection of 20426  Wang prototiles. Since then,  similar examples with a smaller set of Wang prototiles have been found. The state of the art is the example found by K. Culik \cite{culik}  (see also \cite{Kari}) made with 13 Wang prototiles and shown  on  Figure \ref{CulikFrancois}. 
\begin{figure}[h!]
\centering
\includegraphics[width=5cm]{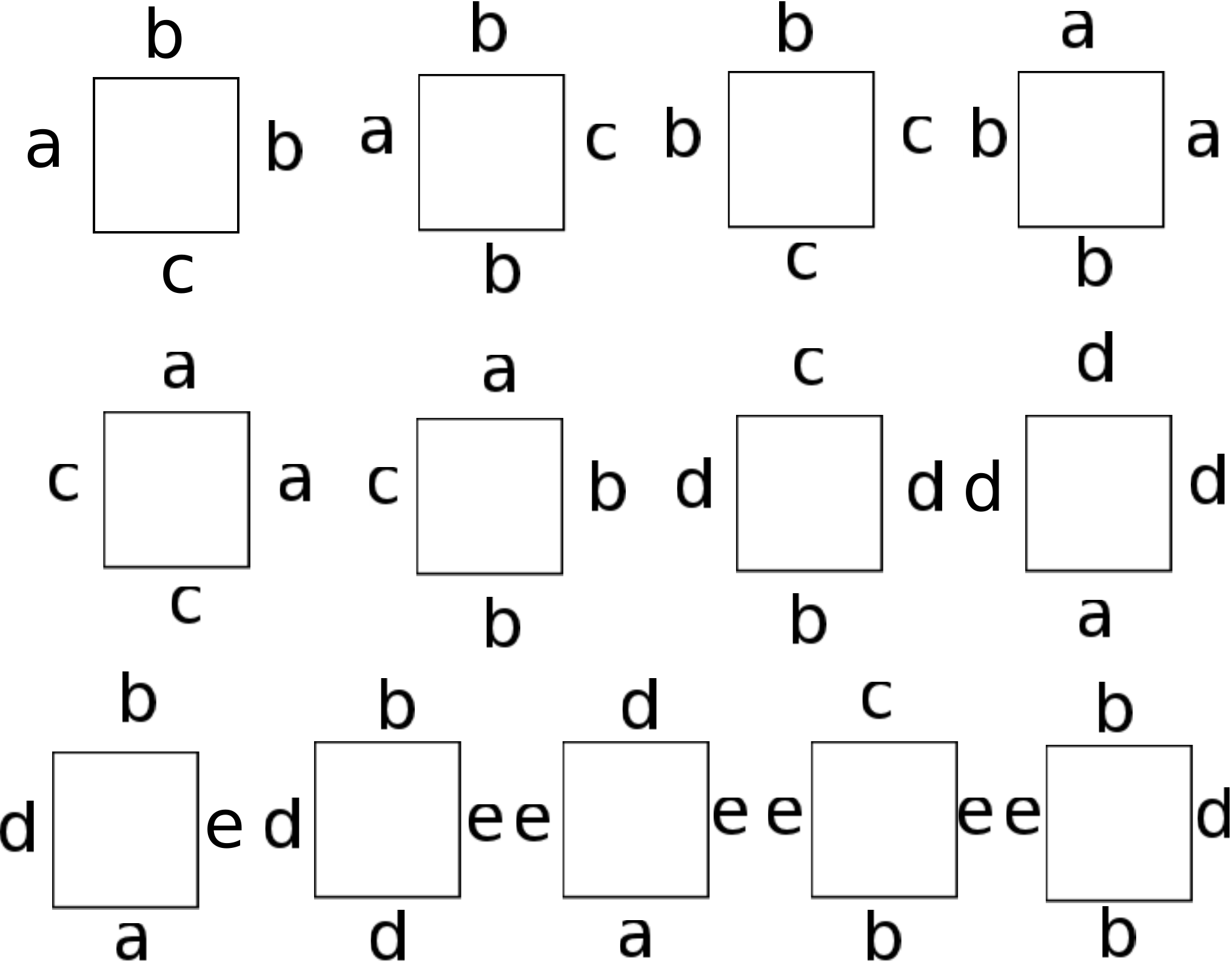}
\caption{A collection of Wang prototiles.}
\label{CulikFrancois}
\end{figure}

Let us introduce two notions concerning Wang tilings that we will use in the proof of the main theorem.

\noindent {\bf Forgetting colors: } 

\noindent For a given collection of Wang prototiles $\Ww =\{w_1, \dots, w_n\}$, we consider the set 
$\widehat \Ww =\{\widehat w_1, \dots,\widehat  w_n\}$, where, for $j =1, \dots, n$,  $\widehat w_j$ is deduced from $w_j$ by forgetting the colors on its sides and keeping its index $j$.  It follows that $AP_{\hat \Ww}$ is a collection of $n$ unit squares (indexed by $j$ in $\{1, \dots, n\}$) glued respectively along their horizontal edges and their vertical edges.
\begin{rem}\,
  $\Omega_{\widehat\Ww}$  is not empty and periodic orbits are dense in  $\widehat\Omega_\Ww$.
Whenever $\Omega_\Ww \neq \emptyset $, there is an isometric embedding of $\Omega_\Ww$
into $\Omega_{\widehat\Ww}$ which commutes with translations.

\end{rem}

\noindent {\bf Enforcing colors: } 

\noindent  Consider a collection of Wang prototiles  $\Ww =\{w_1, \dots, w_n\}$ and fix $p>0$. For each $j$ in $\{1, \dots, n\}$ we consider the collection of  tilings of the square $[-p-1/2, p+1/2]^2$ made  with translated copies of prototiles in $\Ww$  so that colors of common edges of adjacent tiles coincide and such that the central tile which covers $[-1/2, +1/2]^2$ is a copy of $w_j$. We denote by $\{T^1_{w_j, p}, \dots, T^{l(w_j, \Ww, p)}_{w_j, p}\}$ this collection of tilings.

For each tiling $T^l_{w_j, p}$ of $[-p-1/2, p+1/2]^2$   we consider the 4 following `colors':
\begin{itemize}
\item $\textup{Up}(T^l_{w_j, p})$ which is  the restriction of $T^l_{w_j, p}$ to $[-p-1/2, p+1/2]\times [1/2,  p+1/2]$;
\item $\textup{Down}(T^l_{w_j, p})$ which is  the restriction of $T^l_{w_j, p}$ to $[-p-1/2, p+1/2]\times [-p-1/2,-1/2 ]$;
\item $\textup{Left}(T^l_{w_j, p})$ which is  the restriction of $T^l_{w_j, p}$ to $[-p-1/2, -1/2]\times [-p-1/2, p+1/2]$;
\item $\textup{Right}(T^l_{w_j, p})$ which is  the restriction of $T^l_{w_j, p}$ to $[1/2, p+1/2]\times [-p-1/2, p+1/2]$. 
\end{itemize}
and associate  the  Wang prototile  $w_{j, l}$ whose index is the pair  $(j, l)$ and whose edges inherit the color:
\begin{itemize}
\item  $\textup{Up}(T^l_{w_j, p})$ for the top edge;
\item  $\textup{Down}(T^l_{w_j, p})$ for the bottom edge;
\item $\textup{Left}(T^l_{w_j, p})$  for the left edge;
\item and $\textup{Right}(T^l_{w_j, p})$ for the right edge. 
\end{itemize}
We denote by $\Ww^p$ the collection of Wang prototiles $w_{j, l}$ when $j$ runs from $1$ to $n$ and $l$ from $1$ to   $l(w_j, \Ww, p)$.
\begin{rem}
$\Ww$ tiles the plane if and only if,  for each $p>1,$ $\Ww^p$ (and thus  $AP_{\Ww^p}$) is well defined.
\end{rem}
     
The importance of Wang tilings  stems from the following result proved by L. Sadun and R. Williams (which is in fact valid in any dimension).
\begin{theo}\cite{SW}  \label{sw}
For any finite collection of prototiles $\Pp$ which tiles the plane, there exists a finite collection of Wang prototiles
$\Ww$ such that the dynamical systems  $(\Omega_{\!\Pp}, \RR^2)$ and 
$(\Omega_{\Ww}, \RR^2)  $ are orbit equivalent.
\end{theo}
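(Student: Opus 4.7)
The plan is to construct $\Ww$ from $\Pp$ by discretising the local combinatorial data of tilings in $\Omega_{\!\Pp}$, and then to exhibit a homeomorphism $\Phi:\Omega_{\!\Pp}\to\Omega_{\Ww}$ that sends $\RR^2$-orbits to $\RR^2$-orbits via a continuous time-change. The whole construction relies on the fact that $\Omega_{\!\Pp}$ has \emph{finite local complexity} (FLC): since $\Pp$ is finite and tiles meet full-edge to full-edge, for every $R>0$ there are only finitely many translation classes of patches of diameter $\leq R$.

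\textbf{Step 1 (control points).} In each prototile $p_j$ pick an interior \emph{control point} $m_j$, in a generic position to be specified later. To a tiling $T=(t_i)_{i\geq 0}\in\Omega_{\!\Pp}$ with $t_i=p_{k(i)}+u_i$, associate the marked set $M(T)=\{m_{k(i)}+u_i:i\geq 0\}\subset\RR^2$. By construction $M(T)$ is a Delone set with constants depending only on $\Pp$.

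\textbf{Step 2 (Wang alphabet).} Fix $R$ large enough that an $R$-patch centred at any $x\in M(T)$ determines the four markers of $M(T)$ closest to $x$ in the north, south, east and west sectors (with respect to a generic fixed coordinate system), together with the labels encoding which prototiles they sit in and with what offsets. By FLC there are only finitely many such decorated $R$-patches, say $L_1,\dots,L_N$. For each $L_\ell$ introduce a Wang prototile $w_\ell$, realised as a unit square whose four edges carry colours encoding the decorated patch of the next marker in each of the four cardinal directions, subject to the rule that two Wang tiles agree along a common edge if and only if their underlying decorated patches overlap consistently in some tiling of $\Omega_{\!\Pp}$. This produces the finite collection $\Ww$.

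\textbf{Step 3 (the orbit equivalence).} Given $T\in\Omega_{\!\Pp}$, enumerate $M(T)$ as $\{x_{i,j}(T):(i,j)\in\mz^2\}$ using the north and east pointers coming from the decoration, with $x_{0,0}(T)$ the marker selected from the position of the origin by a generic rule (e.g.\ lex-minimal marker closest to $0$). Let $\Phi(T)\in\Omega_{\Ww}$ be the Wang tiling in which the unit square $[i-\tfrac12,i+\tfrac12]\times[j-\tfrac12,j+\tfrac12]$ carries the Wang prototile encoding the decorated $R$-patch of $x_{i,j}(T)$. Under a translation $T\mapsto T+u$ only the origin-dependent choice of $x_{0,0}$ may change, and the identity $\Phi(T+u)=\Phi(T)+\tau(T,u)$ defines a continuous cocycle $\tau$ such that $\tau(T,\cdot)$ is a homeomorphism of $\RR^2$ for each $T$. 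This is precisely orbit equivalence.

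\textbf{Main obstacle.} The finiteness of the Wang alphabet (Steps 1--2) is immediate from FLC; the subtlety lies in Step 3. Two issues require care. First, the origin-to-base-marker rule is discontinuous on the codimension-one loci where two markers are equidistant from the origin; one must use both the generic placement of control points and a sufficiently large $R$ so that the neighbourhood data resolves the ambiguity and $\Phi$ extends continuously across these loci. Second, one must verify that the cocycle $\tau$ is jointly continuous in $(T,u)$ and, for each fixed $T$, a bounded bi-Lipschitz homeomorphism of $\RR^2$; by FLC this reduces to a finite case analysis of how the base marker jumps as $u$ varies within a Voronoi-like cell of $M(T)$.
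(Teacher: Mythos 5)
Your Steps 1--2 (finite local complexity gives a finite alphabet of decorated $R$-patches) are fine, but Step~3 contains a genuine gap that the whole construction hinges on: you assert that the marker set $M(T)$ can be enumerated as $\{x_{i,j}(T):(i,j)\in\mz^2\}$ ``using the north and east pointers''. For a general (in particular aperiodic) tiling, $M(T)$ is merely a Delone set and carries no canonical $\mz^2$-structure. The pointer maps $N$ (``nearest marker in the north sector'') and $E$ need not commute, and nothing forces $E(N(x))=N(E(x))$; without that, the coordinates $x_{i,j}$ are not well defined, the rows and columns of your would-be Wang tiling need not close up combinatorially, and $\Phi(T)$ does not exist. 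This is not a technicality one can fix by genericity of the control points or by enlarging $R$: the obstruction is global, not local. A second, independent problem is surjectivity: you define the edge colours by declaring two Wang tiles adjacent iff their decorated patches ``overlap consistently in some tiling of $\Omega_{\!\Pp}$'', but local consistency of such matching rules does not imply global realizability, so $\Omega_{\Ww}$ may strictly contain $\Phi(\Omega_{\!\Pp})$ and the map need not be onto.

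The paper (following Sadun--Williams, and as sketched in its Lemma on reduction to Wang tilings) takes a different route precisely to manufacture the missing $\mz^2$-structure: first deform the prototiles to combinatorially identical ones with rational vertices (solving finitely many integral linear equations, rational solutions being dense), then rescale to integral prototiles, then replace each straight edge by a zig-zag of unit segments so that every prototile becomes a polyomino of unit lattice squares, and finally colour those unit squares so that the interior squares of each polyomino are forced to reassemble and the boundary squares carry the original adjacency colours. The lattice structure is thus built into the tiles themselves rather than extracted from a Delone set, which is exactly what makes the $\mz^2$-indexing, and hence the orbit equivalence, well defined; the price is that the shape deformation only yields orbit equivalence rather than conjugacy, which is all the theorem claims. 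If you want to keep your combinatorial-encoding approach, you would need to replace the pointer enumeration by an actual construction of a compatible $\mz^2$-grid subordinate to $T$, which is essentially what the zig-zag step accomplishes.
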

\begin{rem} \label{rem} Actually the homeomorphism that realizes the orbit equivalence of Theorem \ref{sw}
has some important rigidity properties that  will be  detailed in the next section.
\end{rem}

\section{Reduction to  Wang tilings}

\begin{lem}\label{reduction}  Theorem \ref{main} is true if it is true for any finite collection of Wang prototiles.
\end{lem}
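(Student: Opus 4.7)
The plan is to transfer the statement of Theorem \ref{main} from the Wang setting to the general polygonal setting by way of the Sadun--Williams orbit equivalence (Theorem \ref{sw}). Given an arbitrary finite family of prototiles $\Pp$, I would invoke Theorem \ref{sw} to produce a finite family of Wang prototiles $\Ww$ for which the dynamical systems $(\Omega_{\!\Pp},\RR^2)$ and $(\Omega_{\Ww},\RR^2)$ are orbit equivalent via a conjugation enjoying the rigidity properties foreshadowed by Remark \ref{rem}. Since $\Omega_{\!\Pp}\neq\emptyset$ if and only if $\Omega_{\Ww}\neq\emptyset$, the ``tiles the plane'' half of Theorem \ref{main} is transferred to the Wang side for free.

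The second step exploits the rigidity of the conjugation, which essentially says that each tile of a tiling in $\Omega_{\!\Pp}$ is determined by a patch of bounded radius in the corresponding tiling of $\Omega_{\Ww}$, and conversely (mutual local derivation). This locality upgrades the orbit equivalence into a canonical affine bijection between the spaces of translation-invariant finite measures $\Mm(\Omega_{\!\Pp})$ and $\Mm(\Omega_{\Ww})$; composing with the evaluation morphisms yields a linear identification $L\colon H_2^+(AP_{\!\Pp},\RR)\to H_2^+(AP_{\Ww},\RR)$ of the non-negative homology cones that sends $\Ev(\Theta(\Omega_{\!\Pp}))$ onto $\Ev(\Theta(\Omega_{\Ww}))$. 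In particular the dichotomy $H_2^+(AP_{\!\Pp},\RR)=\{0\}$ versus $H_2^+(AP_{\!\Pp},\RR)\neq\{0\}$ is preserved.

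The third step, which carries the main geometric content, is to show that the asymptotic Thurston semi-norms are intertwined by $L$, i.e.\ $\tn c\tn=0$ on $S_2(AP_{\!\Pp},\RR)$ if and only if $\tn L(c)\tn=0$ on $S_2(AP_{\Ww},\RR)$. The idea is that, for an integral cycle $c$ and a polygonal surface $\Ff$ realizing $c$ as furnished by Lemma \ref{bla}, the local derivation converts $\Ff$ into a surface cover of $AP_{\Ww}$ whose Euler characteristic is bounded linearly in terms of $\chi(\Ff)$, and symmetrically back. The resulting bounded multiplicative distortion disappears after dividing by $n$ and letting $n\to\infty$, so the zero loci of the two semi-norms match under $L$; continuity (Corollary \ref{conti}) then promotes the statement from $H_2(\cdot;\mq)$ to the whole simplex.

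Combining the three steps finishes the reduction: assuming Theorem \ref{main} for $\Ww$, the conditions ``$\Pp$ tiles the plane'', ``$H_2^+(AP_{\!\Pp},\RR)\neq\{0\}$ together with existence of a zero of $\tn\cdot\tn$ in $S_2(AP_{\!\Pp},\RR)$'', and the identification $\Ev(\Theta(\Omega_{\!\Pp}))=\{\tn\cdot\tn=0\}$, are each equivalent to their Wang analogue. The principal obstacle is a careful analysis of the Sadun--Williams conjugation: one must verify that the mutual local derivation descends to a well-defined affine isomorphism of the homology cones and that it transports surface covers of one Anderson--Putnam complex to surface covers of the other with comparable Euler characteristic. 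This is essentially bookkeeping, but it is where the rigidity alluded to in Remark \ref{rem} must be made fully precise.
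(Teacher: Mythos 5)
Your proposal has the right overall shape---transfer everything through the Sadun--Williams construction---but it contains a genuine gap that makes it circular in the crucial direction. You invoke Theorem \ref{sw} as a black box to produce $\Ww$ from $\Pp$; but that theorem is only stated (and only makes sense) \emph{for collections $\Pp$ which already tile the plane}. Theorem \ref{main} is precisely a criterion for deciding whether $\Omega_{\!\Pp}$ is empty, so in the direction where one assumes $H_2^+(AP_{\!\Pp},\RR)\neq\{0\}$ and the existence of a zero of $\tn\cdot\tn$ and wants to \emph{conclude} that $\Pp$ tiles the plane, you have no $\Ww$ to work with, and your sentence ``$\Omega_{\!\Pp}\neq\emptyset$ if and only if $\Omega_{\Ww}\neq\emptyset$'' cannot even be formulated. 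The paper avoids this by unfolding the Sadun--Williams construction explicitly as a purely prototile-level procedure (perturb the polygons to rational ones preserving the combinatorics of $AP_{\!\Pp}$, rescale to integral prototiles, replace edges by unit zig-zags and recolor the resulting unit squares), which is defined whether or not $\Pp$ tiles the plane and which produces cellular homeomorphisms $AP_{\!\Pp}\cong AP_{\Pp'}\cong AP_{\Pp''}\cong AP_{\Ww}$. These give \emph{literal} identifications of $H_2^+$, of $S_2$, and of the asymptotic Thurston semi-norm (covers correspond bijectively with equal Euler characteristic), so no ``bounded distortion'' argument is needed.

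A second, related defect: you propose to obtain the linear map $L\colon H_2^+(AP_{\!\Pp},\RR)\to H_2^+(AP_{\Ww},\RR)$ by transporting measures and composing with the evaluation morphisms. But the paper points out that $\Ev(\Mm(\Omega_{\!\Pp}))\subset H_2^+(AP_{\!\Pp},\RR)$ may be a \emph{proper} inclusion, so a measure-level bijection only defines $L$ on $\Ev(\Mm(\Omega_{\!\Pp}))$, not on the whole cone or simplex --- and the theorem requires comparing the zero set of $\tn\cdot\tn$ on all of $S_2(AP_{\!\Pp},\RR)$ with $\Ev(\Theta(\Omega_{\!\Pp}))$. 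The identification of the cones must therefore be made directly at the level of cells of the Anderson--Putnam complexes, which again forces you to make the construction of $\Ww$ explicit rather than quoting the orbit equivalence. Your third step (intertwining the semi-norms) is the right idea, but once the complexes are identified cell by cell it comes for free; as written, it rests on the same undefined local derivation. To repair the proof, replace the appeal to Theorem \ref{sw} by the three explicit reductions (general $\to$ rational $\to$ integral $\to$ Wang) carried out on the prototiles themselves.
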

\begin{proof}
The proof splits in the proof of 3 claims. 

\noindent {\bf Claim 1}: 

\noindent{\it Theorem \ref{main} is true if it is true for any finite collection of  rational prototiles.}

\noindent{\it Proof of Claim 1:} In order to prove this claim we need, as announced in Remark \ref{rem}, to go deeper in the proof of Sadun and Williams of Theorem \ref{sw} in \cite{SW} and, for the sake of convenience, we sketch the construction given therein in full details. Consider a finite collection of prototiles $\Pp = \{p_1, \dots, p_n\}$ that tiles the plane. For any $\epsilon >0$, one can construct a finite collection of rational prototiles $\Pp' =  \{p'_1, \dots, p'_n\}$ such that:
\begin{itemize}
\item Each  $p'_i$ has the same number of edges as $p_i$,  and the $p'_i$'s are $\epsilon$-close to the $p_i$'s for the Haussdorff distance and the corresponding edges have the same colors;
\item The Anderson-Putnam complex $AP_{\Pp'}$ is homeomorphic to $AP_{\Pp}$. 

\end{itemize}
This amounts to solve finitely many
equations with integral coefficients and to use the fact that for such systems of equations, rational solutions are dense in the set of solutions.
The one-to-one correspondence of the 2-cells of  $AP_{\Pp'}$ and $AP_{\Pp}$ yields the natural identifications:
\[
H_2^+(AP_{\Pp'}, \RR) = H_2^+(AP_{\Pp}, \RR)\quad {\rm and}\quad S_2(AP_{\Pp'}, \RR) = S_2(AP_{\Pp}, \RR).
\] 
On the one hand,  the construction of the asymptotic Thurston norm  on $ S_2(AP_{\Pp'}, \RR)$ coincides with the  similar construction on  $S_2(AP_{\Pp}, \RR)$. On the other hand, there is a natural cone isomorphism $\I$ between $\Theta(\Omega_{\Pp'})$  and $\Theta(\Omega_{\Pp})$ which is  defined by:

\[
\I(\mu)(A) = \sum_{i=1}^{i=n} \mu(\pi^{-1}(p_i)\cap A)) \cdot \frac{\lambda (p_i)}{\lambda{(p'_i)}},
\]
for any mesurable set $A$ in $\Omega_{\!\Pp}$ and any measure $\mu$ in $\Theta(\Omega_{\Pp})$.
It follows easily that
\[
\Ev( \I(\mu)) = \Ev (\mu), \quad \forall \mu \in \Theta(\Omega_{\Pp}).
\]
This proves Claim 1.

\noindent Let us illustrate the above construction on the classical example of Penrose tilings. Consider the `thin' and `fat' triangles displayed in Figure \ref{penrosetriangles} \footnote{It is customary
to use arrowheads to indicate adjacency rules. Each triangle can be represented as a polyhedron by replacing the arrowheads by appropriate dents and bumps to fit the general definition of tilings given above.}. Together with their rotation by multiples of $2\pi/10$, they generate
a set of prototiles $\Pp$ with 40 elements  which in turn, generates the Penrose dynamical system $(\Omega_{\!\Pp}, \RR^2)$.

\begin{figure}[h!]
\centering
\includegraphics[width=8cm]{./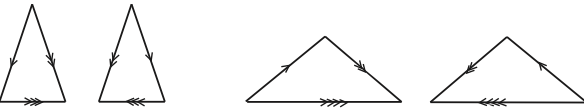}
\caption{The tiles of the Penrose tiling.}
\label{penrosetriangles}
\end{figure}

\noindent  Figure \ref{penrose} shows a patch in $\RR^2$ tiled by Penrose prototiles.

\begin{figure}[h!]
\centering
\includegraphics[width=5cm]{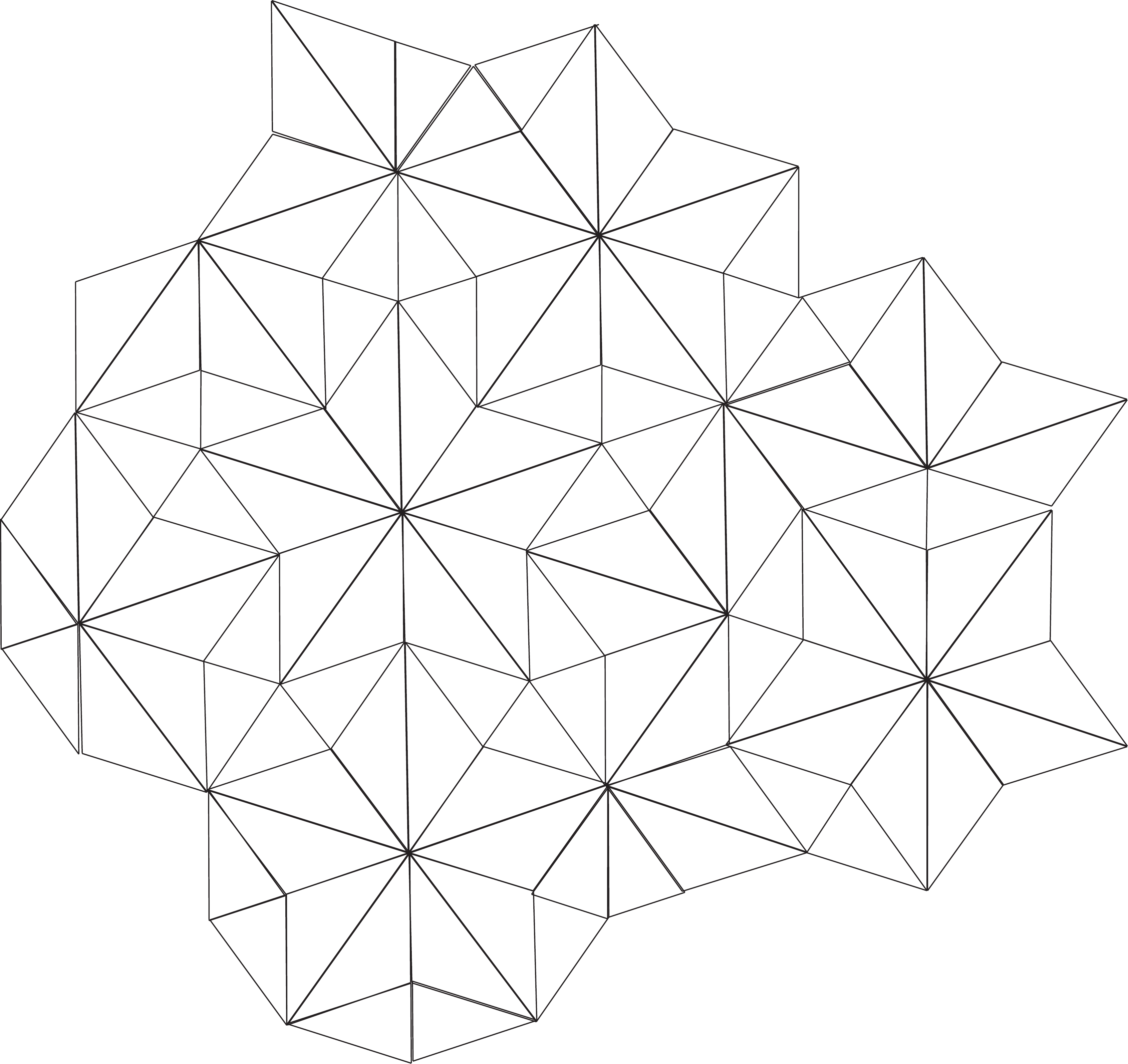}
\caption{A patch of a Penrose tiling.}
\label{penrose}
\end{figure}

\noindent Figure \ref{rationalpenrose} shows now a patch tiled with rational prototiles. 

\begin{figure}[h!]
\centering
\includegraphics[width=5cm]{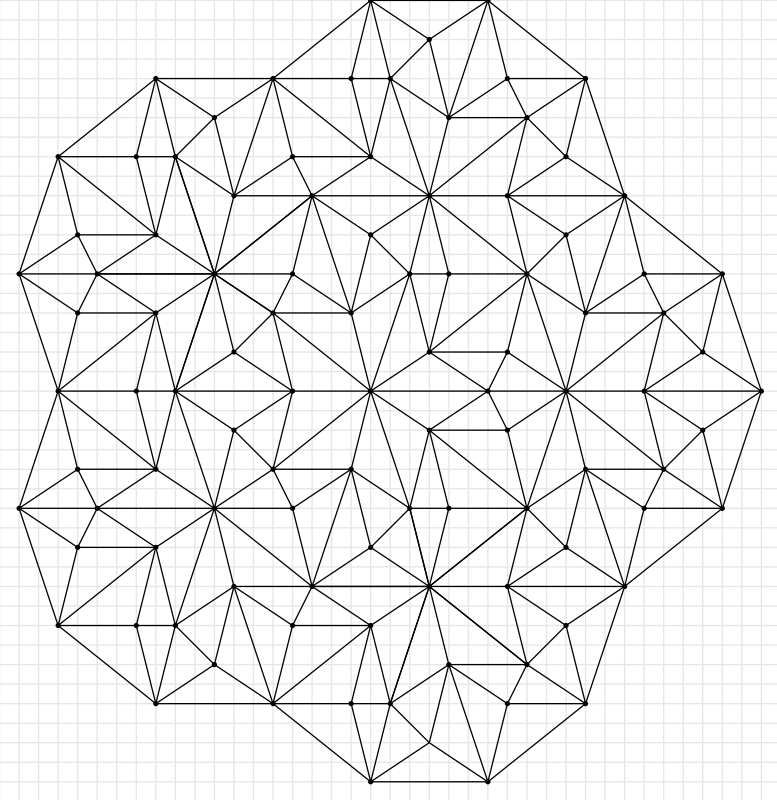}
\caption{A patch of the rational Penrose tiling.}
\label{rationalpenrose}
\end{figure}

\bigskip
\noindent {\bf Claim 2}: 

\noindent{\it Theorem \ref{main} is true for any finite collection of  rational prototiles if it is true for any  finite collection of integral prototiles.}

\noindent{\it Proof of Claim 2: }The next step in \cite{SW} is to transform a finite collection of rational prototiles $\Pp'$ into a finite collection of integral prototiles.  Using a homothety with an integral dilatation factor, one can transform the collection $\Pp'= \{p'_1, \dots, p'_n\}$ in a family of integer  prototiles $\Pp''= \{ p''_1, \dots, p''_n\}$. Clearly both dynamical systems  $(\Omega_{\Pp'}, \RR^2)$ and $\Omega_{\Pp''}, \RR^2)$ are orbit equivalent,  the homeomorphism that realizes this equivalence maps translation invariant measures onto translation invariant measures and the two Anderson-Putnam complex $AP_{\Pp'}$ and $AP_{ \Pp''}$ are homothetic. This proves Claim 2. 

\bigskip
\noindent {\bf Claim 3}: 

\noindent{\it Theorem \ref{main} is true for any finite collection of integral prototiles if it is true for any  finite collection of Wang prototiles.}

\noindent{\it Proof of Claim 3: }
 One proceeds in 2 steps:
\begin{itemize} 

\item [1]   One replaces the straight edges of the prototiles in $\Pp''$
with zig-zags, that is with sequences of unit displacements in the coordinates directions.
We denote by $\widehat \Pp =\{\widehat p_1, \dots, \widehat p_n\}$ the new collection of prototiles obtained this way.  Figure \ref{penrosecarrele} shows  how,  in the particular case of the Penrose collection of rational prototiles,  the  patch described in Figure \ref{rationalpenrose} is transformed.

\begin{figure}[h!]
\centering
\includegraphics[width=5cm]{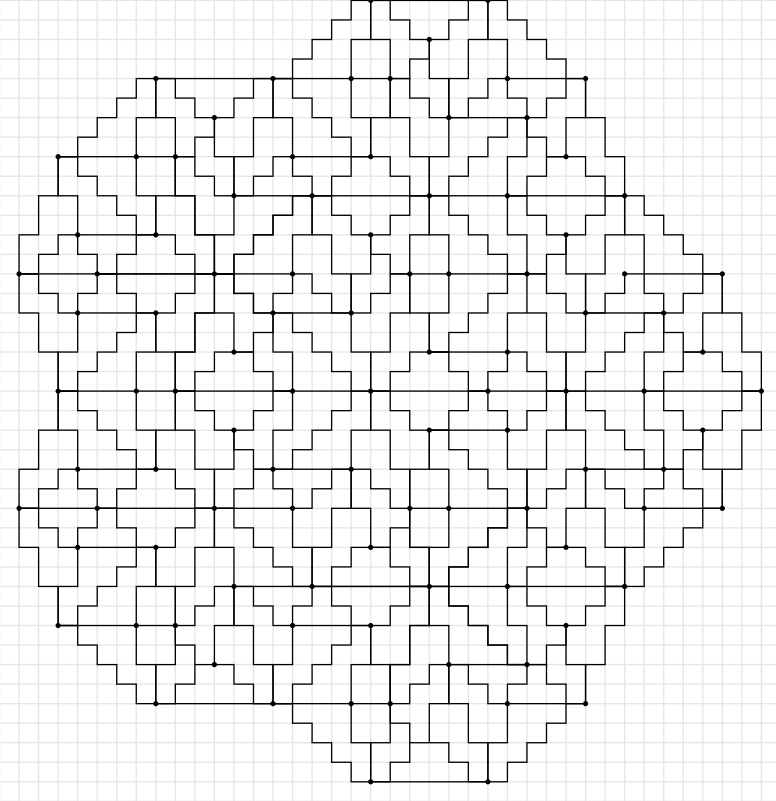}
\caption{A patch of a square Penrose tiling.}
\label{penrosecarrele}
\end{figure}

\item [2] It remains to put a label and appropriate colors on the edges of each square in  each prototile in  $\widehat \Pp$ to obtain a Wang tiling. The encoding is made in such a way that each edge of a square which is in the interior of a prototile of $\widehat\Pp$ forces as neighbors only the square which is  its neighbor in  the prototile and that  any edge  which meets the boundary of a prototile in $\widehat\Pp$ has its color given by the one of the prototile it belongs to. We denote by $\Ww$ the finite collection of Wang prototiles obtained with this construction. 
\end{itemize}
It follows that the dynamical systems $(\Omega_\Ww, \RR^2)$ and $(\Omega_{\widehat \Pp}, \RR^2)$ are
conjugate and that both Anderson-Putnam complexes $AP_\Ww$ and $AP_{\widehat \Pp}$ are homeomorphic.
This allows us to identify:
\[
H_2^+(AP_{\widehat \Pp}, \RR) = H_2^+(AP_{\Ww}, \RR)\quad \textup{and}
\quad S_2(AP_{\widehat\Pp}, \RR) = S_2(AP_{\Ww}, \RR)
\]
and proves Claim 3. 
\end{proof}

\section{Proof of Theorem \ref{main}}\label{bonanniversairetchoa}

Thanks to Lemma \ref{reduction}, we only need to prove Theorem \ref{main} 
for finite collections of Wang prototiles  $\Ww = \{w_1, \dots, w_n\}$.
  
\bigskip
\noindent $\bullet$ {\bf Assume first that $\Omega_\Ww$ is non empty.}
 
 \noindent This implies that the set of translation-invariant probability measures $\Theta(\Omega_\Ww) \neq \emptyset$. Consider an ergodic measure $\mu\in \Theta(\Omega_\Ww)$. From the Birkhoff Ergodic Theorem, we know that for $\mu$-almost every tiling $T$ in $\Omega_\Ww$ and for every prototile $w_j$ in $\Ww$
\[
\lim_{p\to +\infty} \frac{1}{{(2p+1)}^2}\ \Nn( w_j, p) = \mu(\pi^{-1}(w_j))
\]
where $\Nn( w_i, p)$ stands for the number of copies of $w_i$ that appear in $T$   in the square $[-1/2-p, p+1/2]^2$.
Fix $p>0$ and consider the periodic tiling $\widehat T_p$ in $\Omega_{\widehat\Ww}$ obtained from $T$ by repeating the pattern of $T$ in  $[-1/2-p, p+1/2]^2$. More precisely for any $(q, r)$ in $\ZZ^2$, the tile of $\widehat T_p$ centered at $(q, r)$ corresponds to the same prototile as the tile centered at $(q - u(2p+1), r- v(2p+1))$ where
$(u,v)$ are the pair of integers chosen so that $(q - u(2p+1), r- v(2p+1))\in [-1/2-p, p+1/2]^2$.  Consider the probability measure $\hat\mu_p$ which is equidistributed along the  $\mr^2$-orbit of the tiling $\widehat T_p$. 

\noindent On the one hand, notice that  $\Ev(\hat\mu_p)$ is a cycle in  $S_2(AP_{\widehat\Ww},\RR)$ which is given by
\[
\Ev(\hat\mu_p) =  \sum_{j=1}^{j=n} \hat\mu_p(\pi^{-1}(\hat w_j))\, \widehat w_j =
\frac{1}{(2p+1)^2} \sum_{j=1}^{j=n} \Nn(w_i, p) \, \widehat w_j.
\] 
It follows  that
\[
\lim_{p\to +\infty} \Ev(\hat\mu_p) = \sum_{j=1}^{j=n} \mu(\pi^{-1}(w_j))\, \widehat w_j.
\]
On the other hand, the natural inclusion $\Omega_\Ww \subset \Omega_{\widehat \Ww}$ 
allows one  to consider the measure $\mu$ as a measure $\hat \mu$ in $\Theta(\Omega_{\widehat\Ww})$ 
and  the cycle $\Ev(\mu) = \sum_{j=1}^{j=n} \mu(\pi^{-1}(w_j))\, w_j$ in $S_2(AP_{\Ww},\RR)$
can be identified  (through the above inclusion) with the cycle  
$\Ev(\hat\mu) = \sum_{j=1}^{j=n} \mu(\pi^{-1}(w_j))\widehat w_j$ in $S_2(AP_{\widehat\Ww},\RR)$.
Therefore
\[
\lim_{p\to +\infty} \Ev(\hat\mu_p) =  \Ev(\hat\mu).
\]
Since the $\mr^2$-orbit of the tiling $\widehat T_p$ is a 2-torus embedded in $\Omega_{\widehat\Ww}$, it follows directly that $\tn\Ev(\hat\mu_p)\tn = 0$. The continuity of the Thurston semi-norm (Lemma \ref{continue}) implies that $\tn \Ev(\hat\mu)\tn  = 0$ and thus $\tn \Ev(\mu)\tn  = 0$. We conclude that  $H_2^+(AP_\Ww,\RR) \neq \{0\}$  and the set of zeros of the asymptotic Thurston semi-norm on    $S_2(AP_\Ww,\RR) $ is not empty and contains $\Ev(\Theta(\Omega_\Ww)).$

\noindent $\bullet$ {\bf Assume now $H_2^+(AP_\Ww,\RR) \neq \{0\}$ 
and that the Thurston semi-norm has a zero in  $S_2(AP_\Ww,\RR) $.}
 
\noindent Let  $c\in S_2(AP_\Ww,\RR)$ be such that $\tn c\tn  = 0$. The continuity of the Thurston semi-norm and the density of rational cycles in  $S_2(AP_\Ww,\mr) $, imply that there exist a sequence of cycles $(c_\ell)_{\ell\geq 0}$ in $S_2(AP_\Ww,\mq) $ such that   $\lim_{\ell\to +\infty} c_\ell = c$,  a sequence of integers $(n_\ell)_{\ell\geq 0}$ such that
$n_\ell c_\ell$ is a non-negative integral cycle,  and a sequence of surfaces $(\Ff_\ell)_{\ell\geq 0}$ such that for each $\ell\geq 0$:
\[
\Ff_\ell\in \widehat{n_\ell  c_\ell} \quad {\rm and }\quad \lim_{\ell\to +\infty}\frac{\vert \chi(\Ff_\ell)\vert}{n_\ell} = 0.
\]
Fix now $p>0$ and, for each $\ell$ big enough, consider the surface (with boundary) $\Ff_{\ell,p}$ which is made of all the Wang tiles of $\Ff_\ell$ that are at the center of the square $[-1/2-p, p+1/2]^2$ embedded in $\Ff_\ell$. These tiles  are the one which at a distance larger than $p$ from a singular point, {\it i.e.}\, tiles which cannot be connected to a singularity with a path contained in  less than $p$  Wang tiles.  We observe that the number Wang tiles which are at a distance $0$ of a singularity is smaller than  $4\, \chi (\Ff_{l,p})\vert$. On the other hand, for any $p>0$, tiles which are at a distance $p$ from a singularity must share an edge or a vertex with tiles which are at a distance smaller than $p-1$ from the singularity and conversely, each tile which is at a distance $p-1$ from a singularity is in contact with at most eight tiles which are at a distance  $p$ from this singularity. It follows that the number of Wang tiles which are at a distance 
smaller than $p$ from a singularity is smaller than $K(p)\, \vert \chi (\Ff_{\ell,p})\vert$, where $K(p) = 4 (1+ \cdots+8^p)$. 
Let $c_{\ell, p}$ be the chain associated to $\Ff_{\ell, p}$ in $C_2(AP_\Ww,\RR)$. We have: 
\[
\vert c_{\ell, p} - n_\ell c_\ell\vert \leq K(p)\, \vert \chi (\Ff_{\ell,p})\vert
\]
and thus
\[
\lim_{\ell\to +\infty} \frac {c_{\ell, p}}{\vert c_{\ell, p}\vert} = c.
\]
Clearly $\Ww^p$ is not empty since when  $\ell$ is big enough, $\Ff_{\ell, p}$ is not empty. From the very construction  of $\Ff_{\ell,p}$ we get that the chain $c_{\ell, p}$ is the image of a 2-chain  ${c^{(p)}_{\ell, p} }$ in $C_2(AP_{\Ww^p},\RR)$ through the canonical projection $\pi^{(p)}: C_2(AP_{\Ww^p},\RR)\to C_2(AP_{\Ww},\RR)$. Let $c^{(p)}$ be an accumulation point  in  $C_2(AP_{\Ww^p},\RR)$ of the sequence of normalized chains $\left(\frac {c_{\ell, p}^{(p )}}{\vert c_{\ell, p}^{(p )}\vert}\right)_{\ell>0}$. We easily check that 
\[
\vert \partial {c^{(p)}_{\ell, p} }\vert \leq K(p) \, \vert\chi (\Ff_{\ell,p})\vert
\]
which implies that $c^{(p)}$ is a non-negative  2-cycle in $H_2^+(AP_{\Ww^p},\RR)$ and  that $\pi^p(c^{(p)}) = c$. 
It follows that 
\[
c\in  \pi^{(p)} (H_2^+(AP_{\Ww^p}, \RR)), \; \forall p>0.
\]
Since it is well known (see \cite{BG} or \cite{BBG}) that 
\[
\Theta (\Omega_\Ww) = \bigcap_{p>0} H_2^+(AP_{\Ww^p}, \RR)
\] 
we deduce that $\Ww$ tiles the plane and that the zeros of the Thurston semi-norm on 
$S_2(AP_{\Ww},\RR)$  are contained in $\Theta (\Omega_\Ww) $.
  
This ends the proof of our main Theorem. 

 
\section{Discussion and examples}

Let us  examine the different situations that may occur  according to the family $\Ww$  of  Wang prototiles we consider. 
One extreme situation is when $\Ww$ does not tile the plane, in this case either $H_2^+(AP_\Ww, \RR) =\{0\}$
or   $H_2^+(AP_\Ww, \RR) \neq \{0\}$ and the Thurston semi-norm remains strictly positive in $S_2(AP_\Ww, \RR)$.
The other extreme situation is when the colors of the edges are forgotten.
In this situation, periodic orbits are dense in $\Omega_\Ww$,  $\Ev(\Theta(\Omega_\Ww))$
is the whole simplex $S_2(AP_\Ww, \RR)$ and the Thurston  semi-norm  vanishes on the whole simplex.

\noindent Let us now concentrate on the case when $H_2^+(AP_\Ww, \RR) \neq \emptyset$ and  the (convex) set of zeros of the asymptotic Thurston semi-norm (which coincides with $ \Ev(\Theta(\Omega_{\Ww}))$) is not empty. Different cases may occur:
\begin{itemize}
\item  $ \Ev(\Theta(\Omega_{\Ww}))$ is reduced to a single cycle $c$. 
\begin{itemize}
\item If $c$  is not rational, then $\Ww$ cannot tile the plane periodically. This is exactly the situation we studied earlier for the Penrose tiling. 
\item If $c$ is rational, 
\begin{itemize}
\item either $\Vert c\Vert =0$ which means that  $\Ww$ can tile the plane periodically;
\item or  $\Vert c\Vert \neq 0$ and  $\Ww$ cannot tile the plane periodically: this is exactly what happens for the Robinson set of Wang prototiles \cite{robinsonRM}. 
\end{itemize}
\end{itemize}
\item $ \Ev(\Theta(\Omega_{\Ww}))$ is not reduced to a single cycle. In this case we are left with a series of questions, for instance:
\begin{itemize}
\item {\bf Question 1}: Can we find $\Ww$ such that rational cycles are not dense in  $ \Ev(\Theta(\Omega_{\Ww}))$?
\item {\bf Question 2}: When does   $ \Ev(\Theta(\Omega_{\Ww}))$ contain a ball in  $S_2(AP_{\Ww}, \RR)$? 
\end{itemize}

\end{itemize} 

\bigskip

\noindent {\bf Aknowledgements:} The authors acknowledge L. Sadun and Robert F. Williams for allowing the reproduction of some figures from \cite{SW}. 
They also thank an anonymous referee whose fruitful remarks led to a great  improvement of the article.

\end{document}